\documentclass[12pt]{article}

\usepackage{latexsym,amsmath,amscd,amssymb,graphics,float}
\usepackage{enumerate}

\usepackage{graphicx,lscape}

\usepackage[colorlinks]{hyperref}
\usepackage{url}

\textwidth15.8 cm
\topmargin -1.4 cm \textheight22.5 cm \oddsidemargin.4cm
\evensidemargin.4cm

%%% Comments and Todos

\begin{document}

\newenvironment{proof}[1][Proof]{\textbf{#1.} }{\ \rule{0.5em}{0.5em}}

\newtheorem{theorem}{Theorem}[section]
\newtheorem{definition}[theorem]{Definition}
\newtheorem{lemma}[theorem]{Lemma}
\newtheorem{remark}[theorem]{Remark}
\newtheorem{proposition}[theorem]{Proposition}
\newtheorem{corollary}[theorem]{Corollary}
\newtheorem{example}[theorem]{Example}

\numberwithin{equation}{section}
\newcommand{\ep}{\varepsilon}
\newcommand{\R}{{\mathbb  R}}
\newcommand\C{{\mathbb  C}}
\newcommand\Q{{\mathbb Q}}
\newcommand\Z{{\mathbb Z}}
\newcommand{\N}{{\mathbb N}}

\newcommand{\bfi}{\bfseries\itshape}

\newsavebox{\savepar}
\newenvironment{boxit}{\begin{lrbox}{\savepar}
\begin{minipage}[b]{15.5cm}}{\end{minipage}\end{lrbox}
\fbox{\usebox{\savepar}}}

\title{{\bf Asymptotic stabilization with phase of periodic orbits of three-dimensional Hamiltonian systems}}
\author{R\u{a}zvan M. Tudoran}

\date{}
\maketitle \makeatother

\begin{abstract}
We provide a geometric method to stabilize asymptotically with phase an arbitrary fixed periodic orbit of a locally generic three-dimensional Hamiltonian dynamical system. The main advantage of this method is that one needs not know a parameterization of the orbit to be stabilized, but only the values of the Hamiltonian and a fixed Casimir (of the Poisson configuration manifold) at that orbit. The stabilization procedure is illustrated in the case of the Rikitake model of geomagnetic reversal.
\end{abstract}

\medskip

\textbf{AMS 2000}: 37J25; 37J45; 37C27; 34C25.

\textbf{Keywords}: Hamiltonian dynamics; periodic orbits; asymptotic stability with phase.

\section{Introduction}
\label{section:one}

The purpose of this article is to provide a method to stabilize asymptotically with phase an arbitrary fixed periodic orbit of a locally generic three-dimensional Hamiltonian dynamical system. The main advantage of this method is that one needs not know a parameterization of the orbit to be stabilized, but only the values of the Hamiltonian and a fixed Casimir (of the Poisson configuration manifold) at that orbit. Moreover, if there are many periodic orbits located on the same common level set of the Hamiltonian and the Casimir, then the same perturbation can be used in order to asymptotically stabilize all of them in the same time. The method can be applied for a large number of concrete dynamical systems coming from various sciences, which admit three-dimensional Hamiltonian realizations, e.g., Euler's equations of free rigid body dynamics (\cite{mr}, \cite{dgt}), the Rikitake system(\cite{siamt}), the R\"ossler system (\cite{tgr}), the Rabinovich system (\cite{tgrab}), etc.

Before explaining how the stabilization method works, let us clarify what is meant by a locally generic three-dimensional Hamiltonian system. Formally, a \textit{locally generic three-dimensional Hamiltonian system} is a three-dimensional Hamiltonian system restricted to an open neighborhood around a regular point of the Poisson configuration manifold. Recall that in contrast to Casimir invariants (which globally may not exist), locally, around each regular point, every Poisson manifold admits \textit{local Casimir invariants} (i.e., Casimir invariants of the restricted Poisson structure). Accordingly, a locally generic three-dimensional Hamiltonian system is a dynamical system modeled on a Poisson manifold with nontrivial Casimir invariants, each system of this type admitting, apart from the Hamiltonian, another constant of motion, namely some fixed Casimir invariant (as any two Casimir invariants are functionally dependent, we fix one of them, referred to as \textit{the} Casimir invariant). Due also to the three-dimensionality of the problem, each orbit of the system is located on a common level set of the Hamiltonian and the Casimir invariant. Note that it happens often that there are many obits located on the same common level set. If the Hamiltonian or the Casimir invariant is a proper function, then one expects the dynamical system has periodic orbits. In the case the system admits periodic orbits, the stabilization method works as follows: pick a common level set containing a periodic orbit of the dynamical system; than the method tells how to perturb the system in such a way that the periodic orbit remains a periodic orbit for the perturbed system too, the perturbed system keeps dynamically invariant the Hamiltonian (or the Casimir invariant) and moreover, the periodic orbit of the perturbed system is orbitally phase asymptotically stable with respect to perturbations along the level set of the Hamiltonian, which contains the periodic orbit (or along the level set of the Casimir invariant, which contains the periodic orbit). Moreover, using a specific linear combination of the above perturbations, one obtains a perturbed system for which the periodic orbit is orbitally phase asymptotically stable with respect to perturbations in a "full" open neighborhood of the orbit. The price to be paid is the loss of the conservative nature of the perturbed system. On the other hand, the method can be used also in order to obtain the same conclusions, except that the periodic orbit becomes unstable for the perturbed system. In all cases, the perturbed families of dynamical systems are parameterized by arbitrary strictly positive smooth real functions. This method is an improvement of the $n$-dimensional stabilization technique introduced in \cite{TudoranJGM}, for the particular case of locally generic three-dimensional Hamiltonian systems. In contrast with the general case analyzed in \cite{TudoranJGM}, where in order to apply the stabilization method, one needs to know a parameterization of the periodic orbit to be stabilized, here, the only requirement regarding the periodic orbit is the knowledge of the values of the Hamiltonian and the Casimir at the orbit. Moreover, for the class of locally generic three-dimensional Hamiltonian systems, we prove that one of the main hypotheses from \cite{TudoranJGM} is always fulfilled.

The structure of the paper is the following. In the second section, we discuss the local equivalence between the class of three-dimensional Hamiltonian dynamical systems, and that of three-dimensional completely integrable systems. The last section is the main part of this paper, here we introduce the stabilization technique and also illustrate the main result in the case of the Rikitake model of geomagnetic reversal.

\section{3D Hamiltonian systems versus 3D completely integrable systems}

In this section we recall the local equivalence between the class of 3D Hamiltonian dynamical systems modeled on Poisson manifolds, and that of 3D completely integrable systems. For more details regarding the geometry of Poisson manifolds and related topics see, e.g., \cite{zung}, \cite{mr}, \cite{gurs}.

Let us start by choosing an open set $U\subseteq \mathbb{R}^3$ and a smooth bivector field $\Pi\in\mathfrak{X}^2(U)=\Gamma(\Lambda^{2}TU)$ given locally by
$$\Pi (x,y,z)=\alpha(x,y,z)\frac{\partial}{\partial
y}\wedge\frac{\partial}{\partial
z}+\beta(x,y,z)\frac{\partial}{\partial
z}\wedge\frac{\partial}{\partial
x}+\gamma(x,y,z)\frac{\partial}{\partial
x}\wedge\frac{\partial}{\partial y}, ~ (\forall)(x,y,z)\in U,$$ 
where $\alpha,\beta,\gamma \in\mathcal{C}^{\infty}(U,\mathbb{R})$ are smooth real functions. 

Recall that the bivector field $\Pi$ is a \textit{Poisson structure} on $U$ if and only if $[\Pi,\Pi]=0$ (where $[\cdot,\cdot]$ denotes the Schouten-Nijenhuis bracket), this condition being in turn equivalent to the integrability of the smooth $1$-form $\omega=\alpha\mathrm{d}x+\beta\mathrm{d}y+\gamma\mathrm{d}z$, namely, $\mathrm{d}\omega\wedge\omega=0$. In the case when $\Pi$ is a Poisson structure on $U$, the pair $(U,\Pi)$ is a \textit{Poisson manifold}.

Let us recall now the definition of the \textit{sharp} isomorphism between the space of smooth $1-$forms, $\Omega^{1}(U)$, and the space of smooth vector fields, $\mathfrak{X}(U)$, induced by the canonical inner product on $\mathbb{R}^3$.  More precisely, to each smooth $1-$form $\omega = \alpha\mathrm{d}x+\beta\mathrm{d}y+\gamma\mathrm{d}z$, $\alpha,\beta,\gamma \in\mathcal{C}^{\infty}(U,\mathbb{R})$, we associate a smooth vector field, $\omega^{\sharp}$, given by
$$\omega^{\sharp}=\alpha\dfrac{\partial}{\partial x}+\beta\dfrac{\partial}{\partial y}+\gamma\dfrac{\partial}{\partial z}. $$

In the above notations, given a Poisson structure $\Pi$ on $U$, a three-dimensional dynamical system of the type 
$$
\dfrac{\mathrm{d}u}{\mathrm{d}t} =\widetilde{\Pi}(u)\nabla H(u), 
$$
where $H\in\mathcal{C}^{\infty}(U,\mathbb{R})$ is a smooth real function, and
$$\widetilde{\Pi}(u):=\left[ {\begin{array}{*{20}c}
   0 & \gamma(u) & -\beta(u) \\\\
   -\gamma(u) & 0 & \alpha(u) \\\\
    \beta(u)& -\alpha(u) & 0  \\
\end{array}} \right], ~ (\forall) u\in U,$$
is called \textit{Hamiltonian dynamical system}. 

The associated vector field, $u\in U\longmapsto \widetilde{\Pi}(u)\nabla H(u) \in T_{u}U\cong \mathbb{R}^{3}$, is called the \textit{Hamiltonian vector field} associated to the energy/Hamiltonian $H\in C^{\infty}(U,\mathbb{R})$, and is denoted by $X_{H}$.

Using the canonical Lie algebra isomorphism between $(\mathbb{R}^3,+,\cdot_{\mathbb{R}} ,\times)$ and $(\mathfrak{so}(3),+,\cdot_{\mathbb{R}},[\cdot,\cdot])$, the so called "hat" map, $\hat ~: \mathbb{R}^3 \rightarrow \mathfrak{so}(3)$, it follows that $$\widetilde{\Pi}(u)=-\widehat{\omega^{\sharp}(u)}, ~(\forall)u\in U.$$ Consequently, using the properties of the "hat" map we get that for every $u\in U$, 
$$X_{H}(u)=\widetilde{\Pi}(u)\nabla H(u)=-\widehat{\omega^{\sharp}(u)}\nabla H(u)= -\omega^{\sharp}(u) \times \nabla H(u)=\nabla H(u)\times\omega^{\sharp}(u).$$

Let us recall now a generic local form of Hamiltonian dynamical systems modeled on a three-dimensional Poisson manifold. In order to do that, we first recall a generic local form of a general three-dimensional smooth integrable $1-$form, $\omega$. More precisely, around each regular point (i.e., a point where $\omega$ in nonzero), $\omega$ can be written in the form $\omega:=\omega_{\nu, C}=\nu \mathrm{d}C$, for some \textit{locally defined} smooth real functions $C$ and $\nu$, with $\nu$ non-vanishing. The associated Poisson structure will be denoted by $\Pi_{\nu,C}$. As the integrable $1-$form $\omega=y\mathrm{d}x - x \mathrm{d}y$ do not admits a global representation of the type $\nu \mathrm{d}C$, it follows that not every three-dimensional Poisson structure can be written \textit{globally} as $\Pi_{\nu,C}$.

Summarizing, each three-dimensional Hamiltonian dynamical system, can be realized locally (around regular points) as a Hamiltonian system defined on a Poisson manifold of type $(U,\Pi_{\nu, C})$, where $U\subseteq\mathbb{R}^3$ is an open set, and $\nu,C\in\mathcal C^{\infty}(U,\mathbb{R})$ are smooth real functions, with $\nu$ non-vanishing. More precisely, as $(\nu  \mathrm{d}C)^{\sharp}=\nu  \nabla C$, the expression of the Hamiltonian vector field generated by a smooth function $H\in C^{\infty}(U,\mathbb{R})$, is given by
\begin{align*}
X_{H}(u)&=\nabla H(u)\times\omega_{\nu, C}^{\sharp}(u) =\nabla H(u)\times (\nu  \mathrm{d}C)^{\sharp}(u)=\nabla H(u)\times (\nu  \nabla C)(u)\\
&=\nu(u)\left( \nabla H(u)\times \nabla C(u)\right), ~(\forall)u\in U,
\end{align*}
and consequently the associated Hamiltonian dynamical system becomes
\begin{align}\label{ghs}
\dfrac{\mathrm{d}u}{\mathrm{d}t} =\nu(u)\left(\nabla H(u)\times \nabla C(u)\right).
\end{align}

A straightforward computation shows that $H$ and $C$ are both first integrals of the Hamiltonian system \eqref{ghs}. The smooth function $C$, is called \textit{Casimir} function of the Poisson structure $\Pi_{\nu,C}$, and has the dynamical property that it is a first integral of \textit{each} Hamiltonian system realized on the Poisson manifold $(U,\Pi_{\nu,C})$.

Consequently, any three-dimensional Hamiltonian system, is locally a \textit{completely integrable system}, i.e., a three-dimensional dynamical system which admits two first integrals (functionally independent at least on some open subset of the domain of definition). Conversely, we have that that any three-dimensional \textit{completely integrable} dynamical system, can be expressed \textit{at least locally} as a Hamiltonian dynamical system of type \eqref{ghs}. For a similar $n$-dimensional result, see \cite{tjgp}.

Indeed, let $\dfrac{\mathrm{d}u}{\mathrm{d}t} = X(u)$, be a three-dimensional dynamical system generated by a smooth vector field $X\in \mathfrak{X}(\mathbb{R}^3)$. This system will be \textit{completely integrable}, on an open subset $U\subseteq \mathbb{R}^3$, if there exist $I_1, I_2 \in C^{\infty}(U,\mathbb{R})$ two first integrals of $X$, functionally independent at least on some open subset of $U$. 

Hence, the smooth functions $I_1, I_2$ verify the equalities
\begin{align*}
\left(\mathcal{L}_{X} I_1 \right)(u)=0, \quad \left(\mathcal{L}_{X} I_2 \right)(u)=0, ~(\forall)u\in U,
\end{align*}
or equivalently 
\begin{align}\label{fic}
\langle X(u),\nabla I_1 (u)\rangle = 0,\quad \langle X(u),\nabla I_2 (u)\rangle = 0, ~(\forall)u\in U.
\end{align}

From relations \eqref{fic} it follows that for any $u\in U$, the vector $X(u)$ is orthogonal to both $\nabla I_1 (u)$ and $\nabla I_2 (u)$, hence $X(u)$ must be collinear with $\nabla I_1 (u) \times \nabla I_2 (u)$.

Consequently, depending on the existence and the location of the equilibrium points of the vector field $X$, there exists an open set $\Omega\subseteq U$ and a smooth function $\mu \in C^{\infty}(\Omega,\mathbb{R})$ such that $$X(u)=\mu (u)\left( \nabla I_1 (u)\times \nabla I_2 (u)\right),$$ for any $u\in \Omega$. When dealing with concrete dynamical systems, one often notes that the open set $\Omega$ is a proper subset of $U$.

Consequently, any three-dimensional completely integrable dynamical system is \textit{locally} a Hamiltonian system, defined on a Poisson manifold of the type $(\Omega,\Pi_{\nu, C})$, with $\Omega\subseteq \mathbb{R}^3$ an open subset of $\mathbb{R}^3$, $\nu=\mu$, $C=I_2|_{\Omega}$, and energy $H=I_1|_{\Omega}$.

\textbf{Summarizing, in dimension three, completely integrable systems and Hamiltonian systems are locally the same.}

Moreover, the generic integral curves of a Hamiltonian dynamical of type \eqref{ghs}, are located on the connected components of the common level surfaces corresponding to regular values of $H$ and $C$. Consequently, if any of the smooth functions $C$ or $H$ is proper, then one expects generic existence of periodic orbits of the system. Recall that a smooth function $F\in C^{\infty}(\mathbb{R}^3,\mathbb{R})$ is called \textit{proper} if the preimage of every compact set in $\mathbb{R}$ is compact in $\mathbb{R}^3$, which is equivalent to $\lim_{\|u\|\rightarrow \infty}|F(u)|=\infty$.

Note that a large number of concrete dynamical systems from various sciences admits three-dimensional Hamiltonian realizations of this type, e.g., Euler's equations from the free rigid body dynamics (\cite{mr}, \cite{dgt}), the Lotka-Volterra system(\cite{tglv}), the Rikitake system(\cite{siamt}), the R\"ossler system (\cite{tgr}), the Rabinovich system (\cite{tgrab}), etc.

\section{Geometric asymptotic stabilization of periodic orbits of three-dimensional Hamiltonian systems}

The purpose of this section is to improve, for locally generic three-dimensional Hamiltonian dynamical systems, the stabilization technique introduced in \cite{TudoranJGM}, and also to provide a new stabilization result. More precisely, we show that one of the main hypothesis required by the stabilization technique from \cite{TudoranJGM}, is always satisfied in the case of locally generic three-dimensional Hamiltonian systems. Moreover, we will show that the elimination of this hypothesis leads to a new stabilization method, which provides phase asymptotic stability of a certain periodic orbit, even in the case when we are not able to find a parameterization of the orbit (which is the case in many concrete dynamical systems). The only requirement concerning the periodic orbit to be stabilized is the geometric location, more exactly, an implicit characterization of a dynamically invariant set containing it. 

Let us start by recalling some definitions concerning the stability of the periodic orbits of a dynamical system. In order to do that, let $\dot x =X(x)$ be a dynamical system generated by a smooth vector field $X\in\mathfrak{X}(U)$, defined on an open subset $U$ of an $n-$dimensional smooth Riemannian manifold, whose canonical distance (induced by the metric) we denote by $dist$. Suppose $\Gamma =\{\gamma(t)\in U : 0\leq t\leq T \}$ is a $T-$periodic orbit of $\dot x =X(x)$. For more details regarding the stability analysis of periodic orbits see e.g., \cite{hartman}, \cite{verhulst}, \cite{moser}.

\begin{definition}[\cite{TudoranJGM}]
\begin{enumerate}
\item The periodic orbit $\Gamma$ is called \textbf{orbitally stable} if, given $\varepsilon >0$ there exists a $\delta >0$ such that $\operatorname{dist}(x(t;x_0),\Gamma)<\varepsilon $ for all $t>0$ and for all $x_0 \in U$ such that $\operatorname{dist}(x_0,\Gamma)<\delta $, where $t\mapsto x(t;x_0)$ stands for the solution of the Cauchy problem $\dot x =X(x)$, $x(0)=x_0$.
\item The periodic orbit $\Gamma$ is called \textbf{unstable} if it is not orbitally stable.
\item The periodic orbit $\Gamma$ is called \textbf{orbitally asymptotically stable} if it is orbitally stable and moreover (by choosing $\delta$ from item $(1)$ smaller if necessary), $$\operatorname{dist}(x(t;x_0),\Gamma)\rightarrow 0 ~as~ t\rightarrow \infty.$$
\item The periodic orbit $\Gamma$ is called \textbf{orbitally phase asymptotically stable}, if it is orbitally asymptotically stable and there is a $\delta >0$ such that for each $x_0 \in U$ with $\operatorname{dist}(x_0,\Gamma)<\delta $, there exists $\theta_{0}=\theta_0 (x_0)$ such that $$\lim_{t\rightarrow \infty}\operatorname{dist}(x(t;x_0),\gamma(t+\theta_{0}))=0.$$
\end{enumerate}
\end{definition}

We recall now the main theorem from \cite{TudoranJGM} which provides for the class of $n$ - dimensional completely integrable dynamical systems, a constructive perturbation method which keeps an a-priori fixed proper subset of first integrals of the system, preserves a given periodic orbit, and implements an a-priory prescribed stability behavior of the orbit.

\begin{theorem}[\cite{TudoranJGM}]\label{OST}
Let $\dot x= X(x)$ be a completely integrable dynamical system generated by a smooth vector field $X\in\mathfrak{X}(U)$ defined eventually on an open subset $U\subseteq \mathbb{R}^{n}$, and let $k,p\in\mathbb{N}$ be two natural numbers, with $k+p=n-1$, such that there exist $n-1$ first integrals $J_1,\dots,J_k, D_1,\dots, D_p\in\mathcal{C}^{\infty}(U,\mathbb{R})$, independent on an open subset $V \subseteq U$. Suppose the system $\dot x= X(x)$ admits a $T-$periodic orbit $\Gamma=\{\gamma(t)\in V : 0\leq t\leq T \}$ such that:
\begin{itemize}
\item $\Gamma\subset JD^{-1}(\{0\})$, and $0 \in \mathbb{R}^{n-1}$ is a regular value of the map $$JD=(J_1,\dots,J_k,D_1,\dots,D_p):V\subseteq \mathbb{R}^{n}\rightarrow \mathbb{R}^{n-1},$$
\item $\nabla J_1(\gamma(t)),\dots,\nabla J_k(\gamma(t)),\nabla D_1(\gamma(t)),\dots, \nabla D_p (\gamma(t)), X(\gamma(t))$ are linearly\\
 independent for each $0\leq t \leq T$.
\end{itemize}

If moreover, $0 \in \mathbb{R}^{k}$ is a regular value of the map $J=(J_1,\dots,J_k):V \subseteq \mathbb{R}^{n}\rightarrow \mathbb{R}^{k},$ then for any choice of smooth functions $h_1,\dots, h_p \in\mathcal{C}^{\infty}(V,\mathbb{R})$ such that
$$\int_{0}^{T}h_1 (\gamma(s))\mathrm{d}s<0, \dots, \int_{0}^{T}h_p (\gamma(s))\mathrm{d}s<0,$$
$\Gamma$, as a periodic orbit of the dynamical system $\dot x =X(x)+X_{0}(x)$, $x\in V$,
$$
X_{0}=\left\| \bigwedge_{i=1}^{p} \nabla D_i\wedge\bigwedge_{j=1}^{k} \nabla J_j \right\|_{n-1}^{-2}\cdot\sum_{i=1}^{p}(-1)^{n-i}h_i D_i \Theta_i, 
$$
$$
\Theta_i = \star\left[ \bigwedge_{j=1, j\neq i}^{p} \nabla D_j \wedge \bigwedge_{l=1}^{k} \nabla J_l  \wedge\star\left(\bigwedge_{j=1}^{p} \nabla D_j\wedge\bigwedge_{l=1}^{k} \nabla J_l \right)\right],
$$
is orbitally phase asymptotically stable, with respect to perturbations in $V$, along the invariant manifold $J^{-1}(\{0\})$.

On the other hand, for any choice of smooth functions
$k_1,\dots, k_p \in\mathcal{C}^{\infty}(V,\mathbb{R}),$ such that there exists $i_0 \in \{1,\dots,p\}$ for which $$\int_{0}^{T}k_{i_0} (\gamma(s))\mathrm{d}s>0,$$
$\Gamma$, as a periodic orbit of the dynamical system $\dot x =X(x)+X_{0}(x)$, $x\in V$,
$$
X_{0}=\left\| \bigwedge_{i=1}^{p} \nabla D_i\wedge\bigwedge_{j=1}^{k} \nabla J_j \right\|_{n-1}^{-2}\cdot\sum_{i=1}^{p}(-1)^{n-i}k_i D_i \Theta_i, 
$$
$$
\Theta_i = \star\left[ \bigwedge_{j=1, j\neq i}^{p} \nabla D_j \wedge \bigwedge_{l=1}^{k} \nabla J_l  \wedge\star\left(\bigwedge_{j=1}^{p} \nabla D_j\wedge\bigwedge_{l=1}^{k} \nabla J_l \right)\right],
$$
is an unstable periodic orbit.

Here, $\star$ stands for the Hodge star operator for multi-vector fields, and $\|X_1 \wedge\dots\wedge X_{n-1}\|^{2}_{n-1}$ is equal to the Gram determinant of the set of smooth vector fields $\{X_1,\dots,X_{n-1}\}\subset \mathfrak{X}(U)$.
\end{theorem}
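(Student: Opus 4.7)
The plan is to reduce orbital phase asymptotic stability to a Floquet-type analysis by exploiting the fact that the perturbation $X_0$ is algebraically designed to decouple the transverse coordinates $D_1,\ldots,D_p$ on the invariant manifold $J^{-1}(\{0\})$. Once the transverse linearized dynamics along $\Gamma$ are shown to split into $p$ independent scalar equations with Floquet multipliers $\exp\bigl(\int_0^T h_i(\gamma(s))\,\mathrm{d}s\bigr)$, the classical Andronov--Witt theorem on asymptotic stability with phase delivers the conclusion.

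First I would verify that $X_0$ leaves $\Gamma$ invariant and preserves the submanifold $J^{-1}(\{0\})$. The first is immediate: each summand in $X_0$ carries a factor $D_i$, and $D_i|_\Gamma \equiv 0$ by the location hypothesis $\Gamma\subset JD^{-1}(\{0\})$, so $X_0|_\Gamma = 0$ and $\Gamma$ is a $T$-periodic orbit of $\dot x = X+X_0$. For the second, the iterated Hodge-star construction places each $\Theta_i$ in the orthogonal complement of $\operatorname{span}\{\nabla J_1,\ldots,\nabla J_k,\nabla D_j\ (j\neq i),\xi\}$, where $\xi := \star\bigl(\bigwedge_{j}\nabla D_j\wedge\bigwedge_{l}\nabla J_l\bigr)$. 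In particular $\langle \Theta_i,\nabla J_l\rangle=0$ for every $l$, hence $\mathcal{L}_{X_0}J_l=0$ and $J^{-1}(\{0\})$ is dynamically invariant.

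The main computation is to show that inside $J^{-1}(\{0\})$ the perturbed dynamics takes the decoupled form $\dot D_i = h_i(x)\,D_i$ for $i=1,\ldots,p$. Since $X$ annihilates each $D_i$, one has $\dot D_i=\langle X_0,\nabla D_i\rangle$. The orthogonality $\langle \Theta_j,\nabla D_i\rangle = 0$ for $j\neq i$ (again from the construction) kills all cross terms, and the core pointwise identity
$$\langle \Theta_i,\nabla D_i\rangle = (-1)^{n-i}\Big\|\textstyle\bigwedge_j\nabla D_j\wedge\bigwedge_l\nabla J_l\Big\|_{n-1}^{2}$$
cancels exactly the normalizing factor and the sign $(-1)^{n-i}$ in the definition of $X_0$, yielding the desired scalar equation. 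Using the regular-value hypotheses on $JD$ and on $J$, one introduces near $\Gamma$ a coordinate system $(D_1,\ldots,D_p,s)$ on $J^{-1}(\{0\})$ where $s$ is a phase coordinate along the orbit. Linearizing the decoupled equations at $D=0$ gives the variational system $\dot\delta_i = h_i(\gamma(t))\,\delta_i$, whose $i$-th Floquet multiplier is $\exp\bigl(\int_0^T h_i(\gamma(s))\,\mathrm{d}s\bigr)$. Under the sign hypothesis of the stable part, all $p$ transverse multipliers lie strictly inside the unit circle while the tangential multiplier equals $1$, and Andronov--Witt gives orbital phase asymptotic stability with respect to perturbations in $V$ along $J^{-1}(\{0\})$. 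The unstable case is symmetric: if some $\int_0^T k_{i_0}(\gamma(s))\,\mathrm{d}s>0$, the corresponding multiplier exceeds $1$, forcing instability.

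The principal obstacle is the Hodge-star identity $\langle \Theta_i,\nabla D_i\rangle = (-1)^{n-i}\|\bigwedge_j\nabla D_j\wedge\bigwedge_l\nabla J_l\|_{n-1}^{2}$, which alone makes the decoupling and the cancellation of the normalization transparent. It follows from $\star\star = \pm\operatorname{id}$ on $q$-vectors in $\mathbb{R}^n$ together with the compatibility $\langle\star\alpha,\star\beta\rangle = \langle\alpha,\beta\rangle$, but unwinding the signs and the ordering of factors inside the nested wedge-and-star requires careful multilinear bookkeeping. Once this identity is in hand, the reduction to a scalar variational equation and the invocation of Andronov--Witt are routine.
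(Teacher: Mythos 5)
Your proposal is correct and follows essentially the same route as the paper's own machinery: the paper imports this theorem from \cite{TudoranJGM} without reproducing its proof, but its proof of the companion Theorem \ref{NT} is exactly your strategy --- establish $\mathcal{L}_{X+X_0}J_l=0$ and $\mathcal{L}_{X+X_0}D_i=h_iD_i$ from the wedge/Hodge construction, read off the characteristic multipliers $1,\dots,1$ and $\exp\bigl(\int_0^T h_i(\gamma(s))\,\mathrm{d}s\bigr)$ via Theorem \ref{MOST}, and conclude with the Andronov--Witt/Floquet argument. Your key identity $\langle\Theta_i,\nabla D_i\rangle=(-1)^{n-i}\bigl\|\bigwedge_{j}\nabla D_j\wedge\bigwedge_{l}\nabla J_l\bigr\|_{n-1}^{2}$ is indeed the correct piece of sign bookkeeping that produces the decoupling and cancels the normalizing factor.
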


Next theorem gives a perturbation of a completely integrable system such that the resulting system keeps invariant an a-priori fixed periodic orbit and implements a prescribed stability behavior of the orbit, this time with respect to perturbations located in a full open neighborhood of the orbit. The price to be paid is the loss of the conservative nature of the perturbed system. More precisely, the perturbed system will not have any globally defined first integral, since the characteristic multiplier $1$ of the periodic orbit to be stabilized has multiplicity one.

\begin{theorem}\label{NT}
Let $\dot x= X(x)$ be a completely integrable dynamical system generated by a smooth vector field $X\in\mathfrak{X}(U)$ defined eventually on an open subset $U\subseteq \mathbb{R}^{n}$ such that there exists a vector type first integral $I:=(I_1,\dots,I_{n-1})\in\mathcal{C}^{\infty}(U,\mathbb{R}^{n-1})$ of full rank on an open subset $V \subseteq U$. Suppose the system $\dot x= X(x)$ admits a $T-$periodic orbit $\Gamma=\{\gamma(t)\in V : 0\leq t\leq T \}$ such that:
\begin{itemize}
\item $\Gamma\subset I^{-1}(\{0\})$, and $0 \in \mathbb{R}^{n-1}$ is a regular value of the map $I:V\subseteq \mathbb{R}^{n}\rightarrow \mathbb{R}^{n-1}$,
\item $\nabla I_1(\gamma(t)),\dots,\nabla I_{n-1}(\gamma(t)), X(\gamma(t))$ are linearly independent for each $0\leq t \leq T$.
\end{itemize}
Then for any choice of smooth functions $h_1,\dots, h_{n-1} \in\mathcal{C}^{\infty}(V,\mathbb{R})$ such that
$$\int_{0}^{T}h_1 (\gamma(s))\mathrm{d}s<0, \dots, \int_{0}^{T}h_{n-1} (\gamma(s))\mathrm{d}s<0,$$
$\Gamma$, as a periodic orbit of the dynamical system $\dot x =X(x)+\mathbb{X}_{0}(x)$, $x\in V$,
$$
\mathbb{X}_{0}=\left\| \bigwedge_{i=1}^{n-1} \nabla I_i\right\|_{n-1}^{-2}\cdot\sum_{i=1}^{n-1}(-1)^{n-i}h_i I_i \Theta_i, ~where~
\Theta_i = \star\left[ \bigwedge_{j=1, j\neq i}^{n-1} \nabla I_j  \wedge\star\left(\bigwedge_{j=1}^{n-1} \nabla I_j \right)\right],
$$
is orbitally phase asymptotically stable, with respect to perturbations in $V$.

On the other hand, for any choice of smooth functions
$k_1,\dots, k_{n-1} \in\mathcal{C}^{\infty}(V,\mathbb{R}),$ such that there exists $i_0 \in \{1,\dots,n-1\}$ for which $$\int_{0}^{T}k_{i_0} (\gamma(s))\mathrm{d}s>0,$$
$\Gamma$, as a periodic orbit of the dynamical system $\dot x =X(x)+\mathbb{X}_{0}(x)$, $x\in V$,
$$
\mathbb{X}_{0}=\left\| \bigwedge_{i=1}^{n-1} \nabla I_i\right\|_{n-1}^{-2}\cdot\sum_{i=1}^{n-1}(-1)^{n-i}k_i I_i \Theta_i, ~where~
\Theta_i = \star\left[ \bigwedge_{j=1, j\neq i}^{n-1} \nabla I_j  \wedge\star\left(\bigwedge_{j=1}^{n-1} \nabla I_j \right)\right],
$$
is an unstable periodic orbit.
\end{theorem}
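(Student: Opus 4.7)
The plan is to deduce Theorem \ref{NT} as the specialization of Theorem \ref{OST} to the degenerate parameter choice $k=0$, $p=n-1$, with each first integral $I_{i}$ of the present theorem playing the role of a $D_{i}$ in Theorem \ref{OST} and no $J$-integrals present. Reading the empty wedge $\bigwedge_{j=1}^{0}\nabla J_{j}$ as the constant $1$, the normalization factor and the expression for $\Theta_{i}$ in Theorem \ref{OST} collapse term-by-term to the ones written in Theorem \ref{NT}, so the perturbation $\mathbb{X}_{0}$ defined here is literally the $X_{0}$ of Theorem \ref{OST} under this dictionary.

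Next I would verify the hypotheses. The inclusion $\Gamma\subset I^{-1}(\{0\})$, the regular-value condition on $I=(I_{1},\ldots,I_{n-1})$, and the linear independence along $\Gamma$ of $\nabla I_{1}(\gamma(t)),\ldots,\nabla I_{n-1}(\gamma(t)), X(\gamma(t))$ are imposed directly. The supplementary requirement of Theorem \ref{OST} that $0\in \mathbb{R}^{k}$ be a regular value of $J$ is vacuous when $k=0$, so nothing further needs to be checked. Both alternatives of Theorem \ref{OST}, the stabilizing one (with $\int_{0}^{T}h_{i}\circ\gamma<0$ for every $i$) and the destabilizing one (with some $\int_{0}^{T}k_{i_{0}}\circ\gamma>0$), therefore apply verbatim.

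Translating the conclusion: when $k=0$, the invariant level set $J^{-1}(\{0\})$ becomes the whole domain $V$, so the phrase ``orbital phase asymptotic stability with respect to perturbations in $V$, along $J^{-1}(\{0\})$'' from Theorem \ref{OST} reduces precisely to the full-neighborhood statement of Theorem \ref{NT}, and similarly for the instability half. The companion remark that the perturbed system has no globally defined first integral is then a consistency observation rather than a separate claim: full-neighborhood asymptotic stability of $\Gamma$ forces the characteristic multiplier $1$ of the linearization around $\Gamma$ to have algebraic multiplicity one, which precludes the existence of a smooth function constant both on $\Gamma$ and on all nearby trajectories.

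The one point warranting care, and the main potential obstacle in presenting this as a clean reduction, is justifying the $k=0$ case of Theorem \ref{OST}: one must confirm that the proof given in \cite{TudoranJGM} is genuinely uniform in $k\geq 0$, i.e.\ that empty wedges are handled legitimately throughout and that no step implicitly assumes $k\geq 1$. Once that verification is made, Theorem \ref{NT} follows with no additional analytic work.
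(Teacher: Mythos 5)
Your reduction is not the paper's argument, and as it stands it has a real gap: everything hinges on Theorem \ref{OST} being valid in the degenerate case $k=0$, which is exactly the point you defer (``one must confirm that the proof in \cite{TudoranJGM} is genuinely uniform in $k\geq 0$'') and never settle. Theorem \ref{OST} as stated asks for $k,p\in\mathbb{N}$ with a regular-value condition on $J=(J_1,\dots,J_k)$ and concludes only \emph{relative} stability along the invariant manifold $J^{-1}(\{0\})$; for every $k\geq 1$ this is strictly weaker than the full-neighborhood conclusion of Theorem \ref{NT}, because the monodromy then carries $k$ extra unit multipliers. Whether the empty-wedge conventions and the proof in \cite{TudoranJGM} really cover $k=0$ cannot be read off from the statement, and the fact that the author states and proves Theorem \ref{NT} separately indicates it is not regarded as a literal instance of Theorem \ref{OST}. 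So your argument, without reopening the proof in \cite{TudoranJGM}, is essentially a restatement of what is to be proved.

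The paper takes a different and more economical route: it observes that, by construction of $\mathbb{X}_0$, the perturbed field $Y:=X+\mathbb{X}_0$ satisfies $\mathcal{L}_{Y}I_j=\mathcal{L}_{X}I_j+\mathcal{L}_{\mathbb{X}_0}I_j=h_jI_j$ for all $j\in\{1,\dots,n-1\}$, and then invokes not Theorem \ref{OST} but Theorem \ref{MOST}, the characteristic-multiplier theorem, with all $n-1$ functions $I_j$ playing the role of the $D_i$'s and no conserved $J$'s. That theorem gives the multipliers of $\Gamma$ for the perturbed system as the simple trivial multiplier $1$ together with $\exp\bigl(\int_{0}^{T}h_i(\gamma(s))\,\mathrm{d}s\bigr)$, $i=1,\dots,n-1$; when all the integrals are negative these lie inside the unit circle and classical Floquet theory yields orbital phase asymptotic stability in a full neighborhood, while a single positive integral produces a multiplier of modulus greater than one and hence instability. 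If you want to salvage your approach, you should either carry out the $k=0$ verification inside the proof of Theorem \ref{OST} in \cite{TudoranJGM}, or simply replace the appeal to Theorem \ref{OST} by the Lie-derivative computation plus Theorem \ref{MOST}, which is what the paper does. Your closing observation about the multiplier $1$ having multiplicity one and the absence of a global first integral matches the paper's remark and is fine as a consistency check, but it is not needed for the proof.
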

\begin{proof}
As the perturbed vector field $Y:=X+\mathbb{X}_0$ verifies by construction that $$\mathcal{L}_{Y}I_j =\mathcal{L}_{X}I_j+\mathcal{L}_{\mathbb{X}_0}I_j=0+h_j I_j=h_j I_j, ~ (\forall)j\in\{1,\dots,n-1\},$$ the rest of the proof is a direct consequence of the following result from \cite{TudoranJGM}.
\end{proof}

\begin{theorem}[\cite{TudoranJGM}]\label{MOST}
Let $\dot x= Y(x)$ be a dynamical system generated by a smooth vector field $Y\in\mathfrak{X}(U)$ defined eventually on an open subset $U\subseteq \mathbb{R}^{n}$, such that there exist $k,p\in\mathbb{N}$, $k+p=n-1$, and respectively $J_1,\dots,J_k, D_1,\dots, D_p$, $h_1$, $\dots$, $h_p \in\mathcal{C}^{\infty}(U,\mathbb{R})$ such that $\mathcal{L}_{Y}J_1 =\dots= \mathcal{L}_{Y}J_k =0$, and $\mathcal{L}_{Y}D_1 =h_1 D_1$, $\dots$, $\mathcal{L}_{Y}D_p =h_p D_p$. Suppose that $\Gamma=\{\gamma(t)\in U : 0\leq t\leq T \}$ is a $T-$periodic orbit of $\dot x= Y(x)$, such that the following conditions hold true:
\begin{itemize}
\item $\Gamma\subset JD^{-1}(\{0\})$, and $0 \in \mathbb{R}^{n-1}$ is a regular value of the map $$JD=(J_1,\dots,J_k,D_1,\dots,D_p):U\subseteq \mathbb{R}^{n}\rightarrow \mathbb{R}^{n-1},$$
\item $\nabla J_1(\gamma(t)),\dots,\nabla J_k(\gamma(t)),\nabla D_1(\gamma(t)),\dots, \nabla D_p (\gamma(t)), Y(\gamma(t))$ are linearly \\
independent for each $0\leq t \leq T$.
\end{itemize}
Then, the characteristic multipliers of the periodic orbit $\Gamma$ are $$\underbrace{1,\dots,1}_{k+1 \ \mathrm{times}}, \exp\left(\int_{0}^{T}h_1 (\gamma(s))ds\right), \dots, \exp\left(\int_{0}^{T}h_p (\gamma(s))ds\right).$$
\end{theorem}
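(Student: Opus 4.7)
My plan is to reduce the computation of the characteristic multipliers to the spectral analysis of the linearization of the Poincaré first-return map at $\gamma(0)$, and then to read off its eigenvalues directly from the two distinct behaviors of the functions $J_j$ (invariance) and $D_j$ (infinitesimal rescaling along $\Gamma$). Since $\mathcal{L}_Y J_j = \langle \nabla J_j, Y\rangle = 0$ and $\mathcal{L}_Y D_j = \langle \nabla D_j, Y\rangle = h_j D_j$, combined with $D_j|_\Gamma \equiv 0$ forced by $\Gamma \subset JD^{-1}(\{0\})$, all $n-1$ gradients lie in $Y(\gamma(0))^\perp$. I would then choose $\Sigma$ to be the affine hyperplane through $\gamma(0)$ orthogonal to $Y(\gamma(0))$; by the transverse-independence hypothesis and a dimension count, the restrictions of these gradients to $T_{\gamma(0)}\Sigma$ form a basis. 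Let $P:\Sigma\to\Sigma$ be the Poincaré return map defined near $\gamma(0)$, with return-time function $\tau$ and flow $\phi_t$; recall that the characteristic multipliers of $\Gamma$ are $1$ (from the flow direction) together with the eigenvalues of $dP(\gamma(0))$.

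The next step is to exhibit $n-1$ left eigenvectors of $dP(\gamma(0))$. Because each $J_j$ is a first integral, $J_j\circ P = J_j$ on $\Sigma$, so differentiation at $\gamma(0)$ shows that each restricted covector $dJ_j(\gamma(0))|_{T_{\gamma(0)}\Sigma}$ is a left eigenvector of $dP(\gamma(0))$ for eigenvalue $1$. For each $D_j$, integrating $\dot D_j = h_j D_j$ along the flow yields
$$D_j(P(x)) = D_j(x)\exp\!\left(\int_0^{\tau(x)} h_j(\phi_s(x))\,ds\right),$$
and differentiating at $x=\gamma(0)$, where $D_j(\gamma(0))=0$ kills the contribution coming from the $x$-dependence of both $\tau(x)$ and the integrand inside the exponential, gives $dD_j(\gamma(0))|_{T_{\gamma(0)}\Sigma}\circ dP(\gamma(0)) = \lambda_j\cdot dD_j(\gamma(0))|_{T_{\gamma(0)}\Sigma}$ with $\lambda_j = \exp\!\left(\int_0^T h_j(\gamma(s))\,ds\right)$.

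Finally, the $n-1$ restricted covectors are linearly independent: any vanishing relation lifts to an equality $\sum_i c_i\nabla J_i(\gamma(0)) + \sum_j d_j\nabla D_j(\gamma(0)) = \alpha\, Y(\gamma(0))$, and all coefficients must vanish by the hypothesis of linear independence of $\{\nabla J_i,\nabla D_j, Y\}$ along $\Gamma$. Consequently $dP(\gamma(0))$ is diagonalized by these covectors, its characteristic polynomial is $(\lambda-1)^k\prod_{j=1}^p(\lambda-\lambda_j)$, and adjoining the trivial multiplier $1$ from the flow direction produces the announced list. The main subtlety I expect is in the differentiation step for $D_j$: one must verify carefully that the $x$-dependence of the return time and of the integrand contributes only terms carrying a factor of $D_j(x)$, which vanishes at $x=\gamma(0)$, so that only the bare prefactor $\lambda_j\, dD_j(\gamma(0))$ survives the linearization.
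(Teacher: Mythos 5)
Your argument is correct, but note that the paper itself offers no proof to compare against: Theorem \ref{MOST} is quoted verbatim from \cite{TudoranJGM} and used as a black box, so your proposal has to be judged as a self-contained proof, and as such it works. The two identities you exploit are exactly the right ones: $J_j\circ\phi_t=J_j$ makes each restricted covector $dJ_j(\gamma(0))|_{T_{\gamma(0)}\Sigma}$ a left eigenvector of $dP(\gamma(0))$ for the eigenvalue $1$, while the integrating-factor formula $D_j(\phi_t(x))=D_j(x)\exp\bigl(\int_0^t h_j(\phi_s(x))\,ds\bigr)$, differentiated at $\gamma(0)$ where $D_j$ vanishes, produces left eigenvectors with eigenvalues $\exp\bigl(\int_0^T h_j(\gamma(s))\,ds\bigr)$; you correctly isolate the only delicate point, namely that the $x$-derivatives of the return time and of the integrand enter multiplied by $D_j(\gamma(0))=0$ and therefore drop out. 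Your independence step is also sound: a vanishing combination of the restricted covectors forces $\sum_i c_i\nabla J_i(\gamma(0))+\sum_j d_j\nabla D_j(\gamma(0))=\alpha\,Y(\gamma(0))$, so all coefficients vanish by the second hypothesis, the transpose of $dP(\gamma(0))$ is diagonalized, and the multiplier list follows after adjoining the trivial multiplier $1$ in the flow direction. For comparison, the same bookkeeping can be done without a section, directly on the monodromy matrix $M$ of the variational equation: differentiating $J_j(\phi_T(x))=J_j(x)$ and the integrating-factor identity in $x$ at $\gamma(0)$ shows that $\nabla J_j(\gamma(0))$ and $\nabla D_j(\gamma(0))$ are left eigenvectors of $M$ with the same eigenvalues, $Y(\gamma(0))$ is a right eigenvector for $1$, and since all $n-1$ gradients annihilate $Y(\gamma(0))$ along $\Gamma$, the induced action on the one-dimensional quotient is the identity, giving the multiplicity $k+1$ of the eigenvalue $1$; this route, which is in the spirit of the computation in \cite{TudoranJGM}, avoids invoking the equivalence between characteristic multipliers and the Poincar\'e-map spectrum, whereas your route buys a slightly cleaner eigenvalue count on an $(n-1)$-dimensional space at the cost of citing that standard equivalence.
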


Let us consider now a locally generic three-dimensional Hamiltonian system, i.e., a dynamical system of the type
\begin{align}\label{ghss}
\dfrac{\mathrm{d}u}{\mathrm{d}t} =\nu(u)\left(\nabla H(u)\times \nabla C(u)\right),
\end{align}
where $H,C,\nu \in C^{\infty}(U,\mathbb{R})$ are three given smooth real functions defined on an open subset $U\subseteq \mathbb{R}^3$. 

Recall from the previous section that this system is a Hamiltonian system modeled on the Poisson manifold $(U,\Pi_{\nu, C})$, and moreover, it is completely integrable, since it admits two first integrals, $C$ and $H$. In order to eliminate trivial cases, we suppose that $H$ and $C$ are functionally independent at least on an open subset $V\subseteq U$.

The following result implies a characterization of the set of equilibrium points of the system \eqref{ghss}, generated by the smooth vector field $X(u)=\nu(u)\left(\nabla H(u)\times \nabla C(u)\right)$, for every $u\in U$.
\begin{proposition}\label{prpimp}
The vectors $\nabla H(u), \nabla C(u), X(u)$,  are linearly independent if and only if $u$ is not an equilibrium point of the dynamical system \eqref{ghss}.
\end{proposition}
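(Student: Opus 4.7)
My plan is to split the equivalence into its two implications; one direction is essentially immediate, and the other exploits the specific structure of $X(u)$ as a cross product.

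First I would dispatch the easy direction. If $\nabla H(u), \nabla C(u), X(u)$ are linearly independent in $\mathbb{R}^{3}$, then in particular none of them can vanish; hence $X(u)\neq 0$, which by the very definition of an equilibrium point of \eqref{ghss} means that $u$ is not an equilibrium. No further work is needed here.

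The substantive direction is the converse. Assume that $u$ is not an equilibrium, i.e.\ that
\[
X(u)=\nu(u)\bigl(\nabla H(u)\times \nabla C(u)\bigr)\neq 0 .
\]
I would read off two consequences: on the one hand $\nu(u)\neq 0$, and on the other hand $\nabla H(u)\times \nabla C(u)\neq 0$. The second condition is exactly the statement that $\nabla H(u)$ and $\nabla C(u)$ are linearly independent in $\mathbb{R}^{3}$, so they span a $2$-dimensional subspace $\Pi_{u}\subset\mathbb{R}^{3}$. Now $X(u)$ is a nonzero scalar multiple of $\nabla H(u)\times \nabla C(u)$, and by the defining property of the cross product it is orthogonal (with respect to the canonical inner product on $\mathbb{R}^{3}$) to both $\nabla H(u)$ and $\nabla C(u)$, hence orthogonal to $\Pi_{u}$. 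A nonzero vector orthogonal to a $2$-plane cannot lie in that plane, so $X(u)\notin\Pi_{u}$, and adjoining it to a basis of $\Pi_{u}$ yields three linearly independent vectors.

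I do not expect any real obstacle: the entire argument is three lines of linear algebra in $\mathbb{R}^{3}$, turning on the two standard facts that $a\times b\neq 0$ iff $a,b$ are linearly independent, and that $a\times b\perp\operatorname{span}\{a,b\}$. The only mild subtlety is remembering to extract the piece of information $\nu(u)\neq 0$ from $X(u)\neq 0$ so that one may legitimately divide $X(u)$ by $\nu(u)$ and conclude that $\nabla H(u)\times\nabla C(u)$ itself is nonzero.
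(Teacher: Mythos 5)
Your proof is correct, and it takes a somewhat different route from the paper's. The paper proves both implications at once by computing the determinant: using the triple product identity it shows
\[
\det\bigl(\nabla H(u)\mid \nabla C(u)\mid X(u)\bigr)=\nu(u)\,\|\nabla H(u)\times\nabla C(u)\|^{2},
\]
so linear independence holds iff $\nu(u)\neq 0$ and $\nabla H(u)\times\nabla C(u)\neq \overline{0}$, i.e.\ iff $X(u)\neq \overline{0}$. You instead split the equivalence: the forward direction via the triviality that a linearly independent triple contains no zero vector, and the converse via the two standard cross-product facts (nonvanishing of $a\times b$ iff $a,b$ independent, and $a\times b\perp\operatorname{span}\{a,b\}$), together with the correctly noted extraction of $\nu(u)\neq 0$ from $X(u)\neq 0$. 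Both arguments rest on the same elementary geometry of the cross product; the paper's version has the small added benefit of producing an explicit formula for the determinant (which quantifies the degeneracy), while yours is slightly more economical and avoids the triple-product computation altogether. Either is a complete proof of the proposition.
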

\begin{proof}
Note first that the vectors $\nabla H(u), \nabla C(u), X(u)$, are linearly independent if and only if $$\det(\nabla H(u)\mid \nabla C(u) \mid X(u))\neq 0.$$ Using the properties of the triple and respectively the canonical scalar product on $\mathbb{R}^3$, one obtains that for every $u\in U$ the following equalities hold true
\begin{align*}
\det(\nabla H(u)\mid\nabla C(u)\mid X(u))&=\det(\nabla H(u) \mid \nabla C(u) \mid \nu (u)(\nabla H(u)\times \nabla C(u)))\\
&=\nu (u) \det(\nabla H(u) \mid \nabla C(u) \mid \nabla H(u)\times \nabla C(u))\\
&=\nu (u)\langle \nabla H(u), [\nabla C(u)\times (\nabla H(u)\times \nabla C(u))]\rangle \\
&=\nu (u)\langle \nabla H(u)\times \nabla C(u), \nabla H(u)\times \nabla C(u)\rangle\\
&=\nu (u) \|\nabla{H}(u)\times \nabla C (u)\|^2.
\end{align*}
Hence, it follows that the vectors $\nabla H(u), \nabla C(u), X(u)$, are linearly independent if and only if $\nu (u)\neq 0$ and $\nabla{H}(u)\times \nabla C (u) \neq \overline{0}$, which is equivalent to $X(u)\neq \overline{0}$, i.e., $u$ is not an equilibrium point of \eqref{ghss}.
\end{proof}

\medskip
An extremely useful consequence of the above result asserts that the second hypothesis of the Theorem \ref{OST} is always verified for locally generic three-dimensional completely integrable systems, i.e., for Hamiltonian systems of the type \eqref{ghss}. 
\medskip

\begin{remark}\label{superrem}
Suppose the dynamical system \eqref{ghss} admits a periodic orbit $\Gamma=\{\gamma(t)\in U : 0\leq t\leq T \}$ with period $T>0$. Then the vectors $\nabla H (\gamma(t)), \nabla C(\gamma(t)), X(\gamma(t))$ are linearly independent for each $0\leq t \leq T$, since periodic orbits contain no equilibrium points.
\end{remark}
Next theorem is the main result of this paper and is an improvement of Theorem \ref{OST} and Theorem \ref{NT}, in the case of locally generic  three-dimensional Hamiltonian dynamical systems. Note that in this case, the second hypothesis of Theorem \ref{OST} (and Theorem \ref{NT}) is always verified and moreover, one needs not know an explicit parameterization of the periodic orbit to be analyzed. All we need to know is the geometric location of the orbit. Since in the case of a locally generic three-dimensional Hamiltonian system \eqref{ghss}, any orbit is located on some common level set of $H$ and respectively $C$, we shall consider two classes of perturbations, namely, one which keeps dynamically invariant the Hamiltonian $H$, and respectively one which keeps dynamically invariant the Casimir function $C$. In both cases, the periodic orbit to be analyzed will remain a periodic orbit for the perturbed system too. 
\begin{theorem}\label{3DOST}
Let 
\begin{equation}\label{EPR}
\dfrac{\mathrm{d}u}{\mathrm{d}t} =\nu(u)\left(\nabla H(u)\times \nabla C(u)\right), \quad u\in U
\end{equation}
be a locally generic three-dimensional Hamiltonian dynamical system, realized as a Hamiltonian system on the Poisson manifold $(U,\Pi_{\nu, C})$, where $U\subseteq \mathbb{R}^3$ is an open set, and $H,C,\nu \in C^{\infty}(U,\mathbb{R})$ are given smooth real functions, such that $H$ and $C$ are functionally independent on an open subset $V\subseteq U$. 

Suppose there exists $\Gamma\subset V$ a periodic orbit of \eqref{EPR}. If $\Gamma \subseteq (H,C)^{-1}(\{(h,c)\})$, where $(h,c)\in\mathbb{R}^2$ is a regular value for the map $(H,C):U\rightarrow \mathbb{R}^2$, then the following conclusions hold true.
\begin{enumerate}
\item If $c$ is a regular value of the map $C:U\rightarrow \mathbb{R}$, then \textbf{for every smooth function} $\alpha \in C^{\infty}(V,(0,\infty))$:
\begin{enumerate}
\item $\Gamma$, as a periodic orbit of the perturbed dynamical system
\begin{align*}
\dfrac{\mathrm{d}u}{\mathrm{d}t} =\nu(u)\left(\nabla H(u)\times \nabla C(u)\right)-\alpha(u)(H(u)-h)\left[\nabla C(u)\times \left(\nabla H(u)\times \nabla C(u)\right)\right],
\end{align*}
$u\in V$, is orbitally phase asymptotically stable, with respect to perturbations in $V$, along the invariant manifold $C^{-1}(\{c\})$.
\item $\Gamma$, as a periodic orbit of the perturbed dynamical system
\begin{align*}
\dfrac{\mathrm{d}u}{\mathrm{d}t} =\nu(u)\left(\nabla H(u)\times \nabla C(u)\right)+\alpha(u)(H(u)-h)\left[\nabla C(u)\times \left(\nabla H(u)\times \nabla C(u)\right)\right],
\end{align*}
$u\in V$, is unstable.
\end{enumerate}
\item On the other hand, \textbf{for every pair of smooth functions} $\alpha,\beta \in C^{\infty}(V,(0,\infty))$:
\begin{enumerate}
\item $\Gamma$, as a periodic orbit of the perturbed dynamical system
\begin{align*}
\dfrac{\mathrm{d}u}{\mathrm{d}t} =\nu(u)\left(\nabla H(u)\times \nabla C(u)\right)&-\alpha(u)(H(u)-h)\left[\nabla C(u)\times \left(\nabla H(u)\times \nabla C(u)\right)\right]\\
& + \beta(u)(C(u)-c)\left[\nabla H(u)\times \left(\nabla H(u)\times \nabla C(u)\right)\right],
\end{align*}
$u\in V$, is orbitally phase asymptotically stable, with respect to perturbations in $V$.
\item $\Gamma$ is unstable, as a periodic orbit of the perturbed dynamical system obtained from the above system by changing the sign in front of $\alpha$ or $\beta$.
\end{enumerate}
\end{enumerate}
\end{theorem}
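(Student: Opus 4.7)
The strategy is to derive this theorem as a direct specialization of Theorem \ref{OST} (for part (1)) and Theorem \ref{NT} (for part (2)) to the three-dimensional setting, performing the underlying Hodge-star computations explicitly and making a judicious choice of the abstract functions $h_i$.

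For part (1), I would apply Theorem \ref{OST} with $n=3$, $k=p=1$, taking $J_1 := C$ and $D_1 := H-h$. Regularity of $(h,c)$ as a value of $(H,C)$, combined with the assumed regularity of $c$ as a value of $C$, supplies both regular-value hypotheses of the abstract theorem, while linear independence of $\nabla J_1(\gamma(t))$, $\nabla D_1(\gamma(t))$, $X(\gamma(t))$ is handed over automatically by Remark \ref{superrem} and Proposition \ref{prpimp}. The key calculation is the reduction of the Hodge-star formulas in $\mathbb{R}^3$: under the musical identification of $1$-forms with vectors one has $\star(\nabla H \wedge \nabla C) = \nabla H \times \nabla C$, so
$$\Theta_1 = \star\bigl[\nabla C \wedge \star(\nabla H \wedge \nabla C)\bigr] = \nabla C \times (\nabla H \times \nabla C),$$
and $\|\nabla H \wedge \nabla C\|_2^2 = \|\nabla H \times \nabla C\|^2$. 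Choosing
$$h_1(u) := -\alpha(u)\,\|\nabla H(u)\times \nabla C(u)\|^2$$
in Theorem \ref{OST} reproduces exactly the perturbation of part (1)(a), and the required sign of $\int_0^T h_1(\gamma(s))\,\mathrm{d}s$ holds because $\alpha > 0$ and, by Proposition \ref{prpimp} applied to the equilibrium-free set $\Gamma$, the cross product $\nabla H \times \nabla C$ is nowhere zero on $\Gamma$. Reversing the sign of $h_1$ gives part (1)(b).

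Part (2) is handled analogously via Theorem \ref{NT} with $I_1 := H-h$ and $I_2 := C-c$. The same Hodge-star manipulations yield
$$\Theta_1 = \nabla C \times (\nabla H \times \nabla C), \qquad \Theta_2 = \nabla H \times (\nabla H \times \nabla C),$$
with normalizing factor $\|\nabla H \times \nabla C\|^{-2}$. Since $(-1)^{3-1} = +1$ and $(-1)^{3-2} = -1$, setting
$$h_1(u) := -\alpha(u)\,\|\nabla H \times \nabla C\|^2, \qquad h_2(u) := -\beta(u)\,\|\nabla H \times \nabla C\|^2$$
converts $\mathbb{X}_0$ into the expression appearing in (2)(a), and both period-integrals $\int_0^T h_i(\gamma(s))\,\mathrm{d}s$ are strictly negative. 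Flipping the sign of $\alpha$ or of $\beta$ makes the corresponding integral positive, yielding the instability claim in (2)(b). The main obstacle in executing the plan is bookkeeping: getting the sign factors $(-1)^{n-i}$, the triple-product identities, and the cancellation of $\|\nabla H \times \nabla C\|^2$ between the normalizer and the absorbed choice of $h_i$ all to line up, so that the abstract perturbation reduces to the clean form stated in the theorem with $\alpha, \beta$ free in $C^\infty(V,(0,\infty))$.
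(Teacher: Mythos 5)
Your proposal is correct and follows essentially the same route as the paper's proof: specialize Theorem \ref{OST} (resp.\ Theorem \ref{NT}) to $n=3$ with $D=H-h$ and $J=C-c$ (resp.\ $I=(H-h,C-c)$), use Remark \ref{superrem} for the linear-independence hypothesis, reduce the Hodge-star expressions via $\star(u\wedge v)=u\times v$ and $\|u\wedge v\|_2=\|u\times v\|$, and absorb the factor $\|\nabla H\times\nabla C\|^{2}$ into the free positive function (the paper goes from $\zeta$ to $\alpha=\zeta/\|\nabla H\times\nabla C\|^{2}$, you go in the equivalent reverse direction). One small fix: in part (1) you should take $J_1:=C-c$ rather than $J_1:=C$, since otherwise $\Gamma\not\subset JD^{-1}(\{0\})$ and the invariant manifold produced by Theorem \ref{OST} would be $C^{-1}(\{0\})$ instead of $C^{-1}(\{c\})$; you made the correct shift in part (2), so this is only a notational slip.
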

\begin{proof}
The conclusion follows mainly by Theorem \ref{OST}, Theorem \ref{NT} and Remark \ref{superrem}. In order to apply these results, recall first from the previous section that the system \eqref{EPR} is a completely integrable system on the open subset $U\subseteq \mathbb{R}^3$, since the smooth real functions $H$ and $C$ are two first integrals of \eqref{EPR}, functionally independent on an open subset $V\subseteq U$. 

Recall also from Remark \ref{superrem} that $\nabla H(u), \nabla C(u), X(u):=\nu(u)\left(\nabla H(u)\times \nabla C(u)\right)$ are three linearly independent vectors, for every $u\in\Gamma$, and hence the second hypothesis of Theorem \ref{OST} is verified. 

Since $H$ and $C$ are first integrals, their values are constants along the solutions of \eqref{EPR}, and hence as $\Gamma$ is a periodic orbit, there exists $(h,c)\in\mathbb{R}^2$ such that $(H,C)(u)=(h,c)$, for every $u\in\Gamma$. This is equivalent to $H(u)-h=0$, and $C(u)-c=0$, for every $u\in \Gamma$, and consequently $\Gamma$ will be also a periodic orbit for the above defined perturbed systems, given by items $(1)$ and $(2)$. 

Let us show now that $C^{-1}(\{c\})$ is a dynamically invariant set for the perturbed dynamical system from $(1)$, i.e.,  
\begin{align*}
\dfrac{\mathrm{d}u}{\mathrm{d}t} =\nu(u)\left(\nabla H(u)\times \nabla C(u)\right)-\alpha(u)(H(u)-h)\left[\nabla C(u)\times \left(\nabla H(u)\times \nabla C(u)\right)\right], u\in V.
\end{align*}
This follows if one proves that $C$ is a first integral for the above dynamical system. Indeed, if we denote for every $u\in V$
\begin{equation}\label{vfxz}
X_0 (u):=-\alpha(u)(H(u)-h)\left[\nabla C(u)\times \left(\nabla H(u)\times \nabla C(u)\right)\right],
\end{equation}
then the following equalities prove this assertion.
\begin{align*}
&\left(\mathcal{L}_{X+X_0}C \right)(u)=\langle X(u)+X_0 (u), \nabla C(u)\rangle\\
&=\langle \nu(u)\left(\nabla H(u)\times \nabla C(u)\right)-\alpha(u)(H(u)-h)\left[\nabla C(u)\times \left(\nabla H(u)\times \nabla C(u)\right)\right], \nabla C(u)\rangle\\
&=\nu(u)\langle \nabla H(u)\times \nabla C(u),\nabla C(u) \rangle - \alpha(u)(H(u)-h) \langle \nabla C(u)\times \left(\nabla H(u)\times \nabla C(u)\right),\nabla C(u) \rangle\\
&=0-\alpha(u)(H(u)-h) \langle \nabla C(u)\times \left(\nabla H(u)\times \nabla C(u)\right),\nabla C(u) \rangle\\
&=-\alpha(u)(H(u)-h)\langle \langle\nabla C(u),\nabla C(u)\rangle \nabla H(u) - \langle\nabla C(u),\nabla H(u) \rangle \nabla C(u) ,\nabla C(u) \rangle\\
&=-\alpha(u)(H(u)-h) \left( \|\nabla C(u)\|^2 \langle \nabla H(u),\nabla C(u )\rangle - \langle \nabla C(u),\nabla H(u)\rangle\langle \nabla C(u), \nabla C(u) \rangle \right)\\
&=-\alpha(u)(H(u)-h)\left( \|\nabla C(u)\|^2 \langle \nabla H(u),\nabla C(u )\rangle - \langle \nabla C(u),\nabla H(u)\rangle \|\nabla C(u)\|^2 \right)\\
&=0.
\end{align*}

\medskip
Now we have all ingredients necessary to apply the Theorem \ref{OST} for $J:=C-c$ and $D:=H-h$.
\medskip

$(1a)$ In order to prove this item, we apply the Theorem \ref{OST} for the completely integrable system \eqref{EPR}, a periodic orbit $\Gamma \subseteq (H,C)^{-1}(\{(h,c)\})$, (where $(h,c)$ is a regular value of the map $(H,C)$, and $c$ a regular value for the map $C$), and the first integrals $J,D: V\rightarrow \mathbb{R}$, $J(u)=C(u)-c$, $D(u)=H(u)-h$, where $V\subseteq U$ is an open set where $H$ and $C$ (and implicitly $D$ and $J$) are functionally independent. 

If one denotes by $X$ the vector field which generates the system \eqref{EPR}, then by Theorem \ref{OST}, we get that for any smooth function $\zeta \in C^{\infty}(V,(0,\infty))$, $\Gamma$, as a periodic orbit of the system
\begin{align*}
\dfrac{\mathrm{d}u}{\mathrm{d}t} &=X(u)+X_0 (u),  \quad u\in V,\\
X_{0}(u)&= \|\nabla D(u)\wedge\nabla J(u)\|_{2}^{-2}\cdot (-1)^{3-1}(-\zeta(u))D(u)\star[\nabla J(u)\wedge\star(\nabla D(u)\wedge\nabla J(u))],
\end{align*} 
is orbitally phase asymptotically stable, with respect to perturbations in $V$, along the invariant manifold $J^{-1}(\{0\})=C^{-1}(\{c\})$.

Note that the integral condition from Theorem \ref{OST} is verified because $\operatorname{Im}(-\zeta)\subseteq (-\infty,0)$, and hence one obtains that $\int_{0}^{T}(-\zeta(\gamma(t)))\mathrm{d}t<0$, for any parameterization $t\in[0,T]\mapsto \gamma(t)\in \Gamma$ of the $T$-periodic orbit $\Gamma$.

In order to finish the proof of item $(1)$, it is enough to show that the vector field $X_0$ has the same expression as \eqref{vfxz}.

This follows straightforward taking into account that for any $u,v\in \mathbb{R}^3$ the following equalities hold true:
\begin{equation*}
u\times v = \star(u\wedge v), \quad  \|u\wedge v\|_2 =\|\star(u\wedge v)\| = \|u\times v\|.
\end{equation*}
Indeed, since $\nabla J =\nabla C$ and $\nabla D=\nabla H$, we obtain successively that for any $u\in V$
\begin{align*}
X_{0}(u)&= \|\nabla D(u)\wedge\nabla J(u)\|_{2}^{-2}\cdot (-1)^{3-1}(-\zeta(u))D(u)\star[\nabla J(u)\wedge\star(\nabla D(u)\wedge\nabla J(u))]\\
&=\dfrac{-\zeta (u)}{\|\nabla H(u)\times \nabla C(u)\|^2} (H(u)-h)[\nabla C(u)\times (\nabla H(u)\times \nabla C(u))].
\end{align*}
If one denotes 
\begin{equation*}
\alpha (u):=\dfrac{\zeta (u)}{\|\nabla H(u)\times\nabla C(u)\|^{2}}, \quad u\in V,
\end{equation*}
then the expression of vector filed $X_0$ is the same as \eqref{vfxz}. 

Since $\operatorname{sign}(\alpha)=\operatorname{sign}(\zeta)$, we obtain that $\int_{0}^{T}(-\alpha(\gamma(t)))\mathrm{d}t<0$ for any parameterization $t\in[0,T]\mapsto \gamma(t)\in \Gamma$ of the $T$-periodic orbit $\Gamma$, and consequently as all of the hypothesis of the first part of Theorem \ref{OST} are verified, we get the conclusion.

$(1b)$ This item follows by the same arguments as above, except that we replace in the expression of the vector field $X_0$, the control function $\zeta$ by $-\zeta $, and then apply the second part of Theorem \ref{OST}.

$(2a)/(2b)$ These items follow by applying Theorem \ref{NT} and using the same arguments as those used in order to prove items $(1a)/(1b)$.
\end{proof}

\medskip
\begin{remark}\label{remymp}
If one realize the system \eqref{EPR} as a Hamiltonian system defined on the Poisson manifold $(U,\Pi_{-\nu, H})$, with Hamiltonian $C$, then applying the Theorem \ref{3DOST} to this Hamiltonian realization of the system \eqref{EPR}, we obtain similar results. More precisely, suppose there exists $\Gamma\subset V$ a periodic orbit of \eqref{EPR}. 

If $\Gamma \subseteq (H,C)^{-1}(\{(h,c)\})$, where $(h,c)\in\mathbb{R}^2$ is a regular value for the map $(H,C):U\rightarrow \mathbb{R}^2$, then the following conclusions hold true.

\begin{enumerate}
\item If $h$ is a regular value of the map $H:U\rightarrow \mathbb{R}$, then \textbf{for every smooth function} $\alpha \in C^{\infty}(V,(0,\infty))$:
\begin{enumerate}
\item $\Gamma$, as a periodic orbit of the perturbed dynamical system
\begin{align*}
\dfrac{\mathrm{d}u}{\mathrm{d}t} =\nu(u)\left(\nabla H(u)\times \nabla C(u)\right)+\alpha(u)(C(u)-c)\left[\nabla H(u)\times \left(\nabla H(u)\times \nabla C(u)\right)\right],
\end{align*}
$u\in V$, is orbitally phase asymptotically stable, with respect to perturbations in $V$, along the invariant manifold $H^{-1}(\{h\})$.
\item $\Gamma$, as a periodic orbit of the perturbed dynamical system
\begin{align*}
\dfrac{\mathrm{d}u}{\mathrm{d}t} =\nu(u)\left(\nabla H(u)\times \nabla C(u)\right)-\alpha(u)(C(u)-c)\left[\nabla H(u)\times \left(\nabla H(u)\times \nabla C(u)\right)\right],
\end{align*}
$u\in V$, is unstable.
\end{enumerate}
\item On the other hand, \textbf{for every pair of smooth functions} $\alpha,\beta \in C^{\infty}(V,(0,\infty))$:
\begin{enumerate}
\item $\Gamma$, as a periodic orbit of the perturbed dynamical system
\begin{align*}
\dfrac{\mathrm{d}u}{\mathrm{d}t} =\nu(u)\left(\nabla H(u)\times \nabla C(u)\right)&-\alpha(u)(H(u)-h)\left[\nabla C(u)\times \left(\nabla H(u)\times \nabla C(u)\right)\right]\\
& + \beta(u)(C(u)-c)\left[\nabla H(u)\times \left(\nabla H(u)\times \nabla C(u)\right)\right],
\end{align*}
$u\in V$, is orbitally phase asymptotically stable, with respect to perturbations in $V$.
\item $\Gamma$ is unstable, as a periodic orbit of the perturbed dynamical system obtained from the above system by changing the sign in front of $\alpha$ or $\beta$.
\end{enumerate}
\end{enumerate}
\end{remark}

Next result follows directly from Theorem \ref{3DOST} and provides a method to stabilize asymptotically with phase an arbitrary fixed periodic orbit of a two-dimensional Hamiltonian dynamical system. Before stating the stabilization method, let us recall that a \textit{two-dimensional Hamiltonian dynamical system} generated by a Hamiltonian $\mathcal{H}\in\mathcal{C}^{\infty}(\Omega,\mathbb{R})$ defined on an open set $\Omega\subseteq\mathbb{R}^2$, is given by
$$
\dfrac{\mathrm{d}v}{\mathrm{d}t}=\mu(v)\mathbb{J}\nabla \mathcal{H}(v), ~ v\in \Omega,
$$ 
where $\mu\in\mathcal{C}^{\infty}(\Omega,\mathbb{R})$ and $\mathbb{J}= \left[ 
\begin{array}{cc}
   0 & 1 \\
   -1 & 0    
\end{array} \right].$
Note that any two-dimensional Hamiltonian dynamical systems is completely integrable, since the Hamiltonian $\mathcal{H}$ is a first integral, and conversely, any completely integrable two-dimensional system is locally a two-dimensional Hamiltonian dynamical system. 

Let us state now the two-dimensional version of the stabilization method given in Theorem \ref{3DOST}.

\begin{proposition}\label{2DHH}
Let 
\begin{equation}\label{2H}
\dfrac{\mathrm{d}v}{\mathrm{d}t}=\mu(v)\mathbb{J}\nabla \mathcal{H}(v), ~ v\in \Omega,
\end{equation}
be a two-dimensional Hamiltonian dynamical system, where $\Omega\subseteq\mathbb{R}^2$ is an open set and  $\mu,\mathcal{H}\in\mathcal{C}^{\infty}(\Omega,\mathbb{R})$ are given smooth real functions such that $\mathcal{H}$ do not have any critical point in some open subset $W\subseteq \Omega$.

Suppose there exists $\Gamma\subset W$ a periodic orbit of \eqref{2H}. If $\Gamma \subseteq \mathcal{H}^{-1}(\{h\})$ where $h\in\mathbb{R}$ is a regular value of $\mathcal{H}$, then \textbf{for every smooth function} $\alpha \in C^{\infty}(W,(0,\infty))$:
\begin{enumerate}
\item $\Gamma$, as a periodic orbit of the perturbed dynamical system
\begin{align*}
\dfrac{\mathrm{d}v}{\mathrm{d}t} =\mu(v)\mathbb{J}\nabla \mathcal{H}(v) - \alpha(v)(\mathcal{H}(v)-h)\nabla \mathcal{H}(v), ~ v\in W,
\end{align*}
is orbitally phase asymptotically stable, with respect to perturbations in $W$.
\item $\Gamma$, as a periodic orbit of the perturbed dynamical system
\begin{align*}
\dfrac{\mathrm{d}v}{\mathrm{d}t} =\mu(v)\mathbb{J}\nabla \mathcal{H}(v) + \alpha(v)(\mathcal{H}(v)-h)\nabla \mathcal{H}(v), ~ v\in W,
\end{align*}
is unstable.
\end{enumerate}
\end{proposition}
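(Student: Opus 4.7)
The plan is to derive Proposition \ref{2DHH} as a direct corollary of Theorem \ref{3DOST} by embedding the two-dimensional Hamiltonian system \eqref{2H} into a three-dimensional locally generic Hamiltonian system that admits a trivial linear Casimir. Concretely, I would take $U := \Omega \times \mathbb{R} \subseteq \mathbb{R}^3$ and extend the given data to $U$ by
\[
H(x,y,z) := \mathcal{H}(x,y), \qquad C(x,y,z) := z, \qquad \nu(x,y,z) := \mu(x,y).
\]
A direct computation yields $\nabla H \times \nabla C = (\partial_y\mathcal{H},-\partial_x\mathcal{H},0)$, which coincides with $\mathbb{J}\nabla\mathcal{H}$ in the $xy$-plane, so the three-dimensional Hamiltonian system \eqref{EPR} reduces on each horizontal slice $\{z = c\}$ to the original two-dimensional system \eqref{2H} together with the trivial equation $\dot z = 0$.

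Next I would verify the hypotheses of Theorem \ref{3DOST} on the open set $V := W \times \mathbb{R}$: functional independence of $H$ and $C$ follows from $\mathbb{J}\nabla\mathcal{H}\neq 0$, which is equivalent to $\mathcal{H}$ having no critical points in $W$; for any choice of $c\in\mathbb{R}$, the embedded loop $\widetilde\Gamma := \Gamma \times \{c\}$ is a periodic orbit of the 3D system because the $z$-motion is trivial; regularity of the value $(h,c)$ for the map $(H,C)$ and of $c$ for $C$ are automatic since $\nabla C = (0,0,1)$ never vanishes and $\nabla\mathcal{H}$ is nonzero throughout $W$. Applying the vector triple product identity then gives
\[
\nabla C \times (\nabla H \times \nabla C) = \|\nabla C\|^2 \nabla H - \langle \nabla C, \nabla H\rangle \nabla C = \nabla H,
\]
so the three-dimensional perturbation $-\alpha(u)(H(u)-h)[\nabla C(u) \times (\nabla H(u) \times \nabla C(u))]$ prescribed in Theorem \ref{3DOST}(1)(a) collapses to $-\alpha(v)(\mathcal{H}(v)-h)\nabla\mathcal{H}(v)$ in the $xy$-components, with vanishing $z$-component. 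This matches exactly the perturbation claimed in item $(1)$ of Proposition \ref{2DHH}; item $(2)$ follows identically from Theorem \ref{3DOST}(1)(b) with the sign reversed.

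The only nontrivial step is translating the stability conclusion back to two dimensions. Theorem \ref{3DOST}(1)(a) yields orbital phase asymptotic stability of $\widetilde\Gamma$ with respect to perturbations in $V$ along the invariant manifold $C^{-1}(\{c\}) = W \times \{c\}$. Under the canonical identification $W \times \{c\} \cong W$, such perturbations are exactly the arbitrary two-dimensional perturbations of initial data in $W$ appearing in Proposition \ref{2DHH}, and on the invariant slice the embedded dynamics coincide with the perturbed two-dimensional dynamics; hence orbital phase asymptotic stability of $\widetilde\Gamma$ along $C^{-1}(\{c\})$ is literally the statement of orbital phase asymptotic stability of $\Gamma$ in $W$. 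I expect this bookkeeping to be the main point requiring care, since the vector-analytic identities and hypothesis checks are entirely elementary.
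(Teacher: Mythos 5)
Your proposal is correct and follows essentially the same route as the paper: the paper's proof also embeds the planar system into $\mathbb{R}^3$ by setting $H(v,z):=\mathcal{H}(v)$, $C(v,z):=z$, $\nu(v,z):=\mu(v)$ on a product neighborhood and then applies Theorem \ref{3DOST} to the lifted orbit, with perturbations along the horizontal slice. The only differences are cosmetic (you use $\Omega\times\mathbb{R}$ and the slice $z=c$ rather than $\Omega\times(-\varepsilon,\varepsilon)$ and $z=0$) and that you spell out the triple-product reduction of the perturbation and the transfer of the stability/instability conclusions, which the paper leaves implicit.
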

\begin{proof}
Using the notations of Theorem \ref{3DOST}, we define $U:=\Omega\times (-\varepsilon,\varepsilon)$, $V:=W \times (-\varepsilon,\varepsilon)$ (where $\varepsilon >0$ is a fixed real number), and $H(v,z):=\mathcal{H}(v)$, $C(v,z):=z$, $\nu(v,z):=\mu(v)$ for every $(v,z)\in U$. The rest of the proof follows from Theorem \ref{3DOST} applied to the periodic orbit $\Gamma\times\{0\}$, and perturbations along $W\times\{0\}$. 
\end{proof}

\bigskip
Let us now illustrate the stabilization result introduced in Theorem \ref{3DOST} in the case of a Hamiltonian version of the Rikitake system.

\begin{example}
Let us start by recalling that the Rikitake system is a dynamical system which provides a mathematical model for the irregular polarity switching of Earth's magnetic field \cite{rikitake}. Since this system exhibits a chaotic behavior, a good knowledge of the conservative part, leads to better understanding of the provenience of its complex behavior. Let us consider now the conservative part of the Rikitake system studied in \cite{siamt}, namely the dynamical system generated by the vector field
$$
X(x,y,z)=\left( yz+\beta y \right) \dfrac{\partial}{\partial x}+ \left( xz - \beta x \right) \dfrac{\partial}{\partial y} -xy \dfrac{\partial}{\partial z}\in\mathfrak{X}(\mathbb{R}^3),
$$
where $\beta$ is a real parameter.

As proved in \cite{siamt}, the induced dynamical system, \begin{equation}\label{HOOG}\dot{u}=X(u), \ u=(x,y,z)\in\mathbb{R}^{3},\end{equation} admits a three-dimensional Hamiltonian realization of the type \eqref{EPR}, where
$$
\nu(x,y,z)=1, \ H(x,y,z)=\dfrac{1}{4}\left( -x^2 +y^2 \right)-\beta z, \ C(x,y,z)=\dfrac{1}{2}\left(x^2 + y^2 \right) + z^2.
$$

Note that the maximal set where $\nabla H$ and $\nabla C$ are linearly independent, is the open set given by the complement of the set of equilibrium points of \eqref{HOOG}, namely $$V:=\mathbb{R}^3 \setminus\{\{(x,0,\beta): x\in\mathbb{R}\}\cup \{(0,y,-\beta): y\in\mathbb{R}\} \cup \{(0,0,z): z\in\mathbb{R}\}\}.$$
 
Recall from \cite{siamt} that there exists an open and dense subset of the image of the map $(H,C):\mathbb{R}^3 \rightarrow \mathbb{R}^2$, such that each fiber of any element $(h,c)$ from this set, corresponds to periodic orbits of Euler's equations. Moreover, any such element is a regular value of $(H,C)$, as well as its components for the corresponding maps, $H$ and respectively $C$.

Let $(h,c)\in \mathbb{R}^2$ belongs to the above mention set, and let $\Gamma\subseteq (H,C)^{-1}(\{(h,c)\})$ be a periodic orbit of the dynamical system \eqref{HOOG}.

Then by Theorem \ref{3DOST} and the Remark \ref{remymp}, the following conclusions hold true.
\begin{enumerate}
\item \textbf{For every smooth function} $a \in C^{\infty}(V,(0,\infty))$:
\begin{enumerate}
\item $\Gamma$, as a periodic orbit of the perturbed dynamical system
\begin{equation*}
\left\{\begin{array}{lll}
\dot{x}=& yz + \beta y -a(x,y,z)\left[\dfrac{1}{4}\left( -x^2 +y^2 \right)-\beta z-h\right] x\left( -y^2 -2z^2+2\beta z \right)\\
\dot{y}=& xz - \beta x -a(x,y,z)\left[\dfrac{1}{4}\left( -x^2 +y^2 \right)-\beta z-h\right] y\left( x^2 +2z^2 +2\beta z \right)\\
\dot{z}=& -xy            -a(x,y,z)\left[\dfrac{1}{4}\left( -x^2 +y^2 \right)-\beta z-h\right]\left[ z(x^2 - y^2)-\beta (x^2 + y^2)\right]
\end{array}
\right.
\end{equation*}
$u\in V$, is orbitally phase asymptotically stable, with respect to perturbations in $V$, along the invariant manifold 
\begin{equation*}
C^{-1}(\{c\})=\left\{(x,y,z)\in\mathbb{R}^3 \mid \dfrac{1}{2}\left(x^2 + y^2 \right) + z^2=c \right\}.
\end{equation*}
\item $\Gamma$ is unstable, as a periodic orbit of the perturbed dynamical system obtained from the above system by changing the sign in front of $a$.
\end{enumerate}

\item \textbf{For every smooth function} $a\in C^{\infty}(V,(0,\infty))$:
\begin{enumerate}
\item $\Gamma$, as a periodic orbit of the perturbed dynamical system
\begin{equation*}
\left\{\begin{array}{lll}
\dot{x}=& yz + \beta y +a(x,y,z)\left[\dfrac{1}{2}\left(x^2 + y^2 \right) + z^2-c\right] x\left( -\dfrac{y^2}{2}+\beta z -{\beta}^2 \right)\\
\dot{y}=& xz - \beta x +a(x,y,z)\left[\dfrac{1}{2}\left(x^2 + y^2 \right) + z^2-c\right] y\left( -\dfrac{x^2}{2}-\beta z -{\beta}^2 \right)\\
\dot{z}=& - xy           +a(x,y,z)\left[\dfrac{1}{2}\left(x^2 + y^2 \right) + z^2-c\right]\dfrac{1}{2}\left[ -z(x^2 +y^2)+\beta (x^2- y^2)\right]
\end{array}
\right.
\end{equation*}
$u\in V$, is orbitally phase asymptotically stable, with respect to perturbations in $V$, along the invariant manifold 
\begin{equation*}
H^{-1}(\{h\})=\left\{(x,y,z)\in\mathbb{R}^3 \mid \dfrac{1}{4}\left( -x^2 +y^2 \right)-\beta z=h \right\}.
\end{equation*}
\item $\Gamma$ is unstable, as a periodic orbit of the perturbed dynamical system obtained from the above system by changing the sign in front of $a$.
\end{enumerate}

\item \textbf{For every pair of smooth functions} $a,b \in C^{\infty}(V,(0,\infty))$:
\begin{enumerate}
\item $\Gamma$, as a periodic orbit of the perturbed dynamical system
\begin{alignat*}{2}
\left\{\begin{array}{lll}
\dot{x}= yz + \beta y - a(x,y,z)\left[\dfrac{1}{4}\left( -x^2 +y^2 \right)-\beta z-h\right] x\left( -y^2 -2z^2+2\beta z \right)\\
+ b(x,y,z)\left[\dfrac{1}{2}\left(x^2 + y^2 \right) + z^2-c\right] x\left( -\dfrac{y^2}{2}+\beta z -{\beta}^2 \right)\\
\dot{y}= xz - \beta x -a(x,y,z)\left[\dfrac{1}{4}\left( -x^2 +y^2 \right)-\beta z-h\right] y\left( x^2 +2z^2 +2\beta z \right)\\
+b(x,y,z)\left[\dfrac{1}{2}\left(x^2 + y^2 \right) + z^2-c\right] y\left( -\dfrac{x^2}{2}-\beta z -{\beta}^2 \right)\\
\dot{z}= -xy            -a(x,y,z)\left[\dfrac{1}{4}\left( -x^2 +y^2 \right)-\beta z-h\right]\left[ z(x^2 - y^2)-\beta (x^2 + y^2)\right]\\
+b(x,y,z)\left[\dfrac{1}{2}\left(x^2 + y^2 \right) + z^2-c\right]\dfrac{1}{2}\left[ -z(x^2 +y^2)+\beta (x^2- y^2)\right]
\end{array}
\right.
\end{alignat*}
$u\in V$, is orbitally phase asymptotically stable, with respect to perturbations in $V$.
\item $\Gamma$ is unstable, as a periodic orbit of the perturbed dynamical system obtained from the above system by changing the sign in front of $a$ or $b$.
\end{enumerate}

\end{enumerate}
\end{example}

\bigskip
\bigskip

\noindent {\sc R.M. Tudoran}\\
West University of Timi\c soara\\
Faculty of Mathematics and Computer Science\\
Department of Mathematics\\
Blvd. Vasile P\^arvan, No. 4\\
300223 - Timi\c soara, Rom\^ania.\\
E-mail: {\sf tudoran@math.uvt.ro}\\
\medskip

\end{document}